  \newcommand*{\e}{\ensuremath{{\operatorname{e}}}}
  \def\om{\omega}
  \def\La{\Lambda}
  \def\si{\sigma}  
  \def\RR{{\mathbb{R}}}
  \def\TT{{\mathbb{T}}}
  \def\whd{\hat{d}}
  \def\whD{\hat{D}}  
  \def\ZZ{{\mathbb{Z}}}
\newtheorem{theorem}{Theorem}[section]
\newtheorem{corollary}[theorem]{Corollary}
\newtheorem{lemma}[theorem]{Lemma}
\newtheorem{proposition}[theorem]{Proposition}
\theoremstyle{definition}
\newtheorem{definition}[theorem]{Definition}
\newtheorem{example}[theorem]{Example}
\theoremstyle{remark}
\newtheorem{remark}[theorem]{Remark}
\newtheorem{conjecture}{Conjecture}
\theoremstyle{thmx}
\newcommand{\ALIGN}{\begin{align*}}
\newcommand{\ENDALIGN}{\end{align*}}
\newcommand{\ENUM}{\begin{enumerate}}
\newcommand{\ENUMa}{\begin{enumerate}[a.]}
\newcommand{\ENUMA}{\begin{enumerate}[A.]}
\newcommand{\ENUMAF}{\begin{enumerate}[\bf A.]}
\newcommand{\ENUMi}{\begin{enumerate}[i)]}
\newcommand{\ENDENUM}{\end{enumerate}}
\newcommand{\ITMZ}{\begin{itemize}}
\newcommand{\ENDITMZ}{\end{itemize}}
\newcommand{\REFEQN}[1] { \begin{equation}\label{#1} }
\newcommand{\ENDEQN}{\end{equation}}
\newcommand{\THM}{\begin{theorem}}
\newcommand{\REFEXA}[1] { \begin{example}\label{#1} }
\newcommand{\ENDEXA}{\end{example}}
\newcommand{\MTX}{ \begin{matrix}}
\newcommand{\ENDMTX}{ \end{matrix}}
\newcommand{\REM}{ \begin{remark}}
\newcommand{\ENDREM}{\end{remark}}
\newcommand{\REFTHM}[1] { \begin{theorem}\label{#1} }
\newcommand{\RREFTHM}[2] { \begin{theorem}[#1]\label{#2} }
\newcommand{\ENDTHM}{\end{theorem}}
\newcommand{\REFNTH}[1] { \begin{newthm}\label{#1} }
\newcommand{\ENDNTH}{\end{newthm}}
\newcommand{\REFPROP}[1]{\begin{proposition}\label{#1} }
\newcommand{\RREFPROP}[2]{\begin{proposition}[#1]\label{#2} }
\newcommand{\PROP}{\begin{proposition}}
\newcommand{\ENDPROP}{\end{proposition} }
\newcommand{\REFDEF}[1]{\begin{definition}\label{#1} }
\newcommand{\DEF}{\begin{definition}}
\newcommand{\ENDDEF}{\end{definition} }
\newcommand{\REFLEM}[1]{\begin{lemma}\label{#1} }
\newcommand{\RREFLEM}[2]{\begin{lemma}[#1]\label{#2} }
\newcommand{\LEM}{\begin{lemma}}
\newcommand{\ENDLEM}{\end{lemma} }
\newcommand{\REFCOR}[1]{\begin{corollary}\label{#1} }
\newcommand{\COR}{\begin{corollary}}
\newcommand{\ENDCOR}{\end{corollary} }
\newcommand{\CONJ}{\begin{conjecture}}
\newcommand{\REFCONJ}[1]{\begin{conjecture}\label{#1}}
\newcommand{\RREFCONJ}[2]{\begin{conjecture}{#1}\label{#2}}
\newcommand{\ENDCONJ}{\end{conjecture} }
\newcommand{\REFDEFTHM}[1] { \begin{defthm}\label{#1} }
\newcommand{\ENDDEFTHM}{\end{defthm}}
\newcommand{\figref}[1]{Fig.~\ref{#1}}
\newcommand{\lemref}[1]{Lemma~\ref{#1}}
\newcommand{\thmref}[1]{Theorem~\ref{#1}}
\newcommand{\PROOF}{\begin{proof}}
\newcommand{\ENDPROOF}{\end{proof}}
\numberwithin{equation}{section}
\newcommand{\vs}{\vspace{6pt}}
\newcommand{\bit}{\it \bfseries}
\newcommand{\es}{\emptyset}
\newcommand{\sm}{\smallsetminus}
\newcommand{\ov}{\overline}
\newcommand{\bd}{\partial}
\newcommand{\ve}{\varepsilon}
\newcommand{\modd}{\ (\operatorname{mod} \ZZ)}
\newcommand{\ord}{\operatorname{ord}}
\def\CC{\mathbb{C}}
\begin{document}

\title{Periodic Points and Smooth Rays}

\author[C. L. Petersen and S. Zakeri]{Carsten Lunde Petersen and Saeed Zakeri}

\address{Department of Mathematics, Roskilde University, DK-4000 Roskilde, Denmark} 

\email{lunde@ruc.dk}

\address{Department of Mathematics, Queens College of CUNY, 65-30 Kissena Blvd., Queens, New York 11367, USA and The Graduate Center of CUNY, 365 Fifth Ave., New York, NY 10016, USA}

\email{saeed.zakeri@qc.cuny.edu}

\date{September 2, 2020}

\begin{abstract}
Let $P: \CC \to \CC$ be a polynomial map with disconnected filled Julia set $K_P$ and let $z_0$ be a repelling or parabolic periodic point of $P$. We show that if the connected component of $K_P$ containing $z_0$ is non-degenerate, then $z_0$ is the landing point of at least one {\it smooth} external ray. The statement is optimal in the sense that all but one ray landing at $z_0$ may be broken.     
\end{abstract}

\maketitle

\section{Introduction}\label{sec:intro}

It has been known since the pioneering work of Douady and Hubbard in the early 1980's that a repelling or parabolic periodic point of a polynomial map with connected Julia set is the landing point of finitely many external rays of a common period and combinatorial rotation number \cite{H,M,P}. When suitably formulated, this statement remains true for polynomials with disconnected Julia set although in this case one must also allow broken external rays that crash into the escaping (pre)critical points \cite{LP}. To state this precisely, let $P:\CC \to \CC$ be a polynomial of degree $D \geq 2$ with disconnected filled Julia set $K_P$. The external rays of $P$ can be defined in terms of the gradient flow of the Green's function $G$ of $P$ in $\CC \sm K_P$. They consist of smooth field lines that descend from $\infty$ and approach $K_P$, as well as their limits which are broken rays that abruptly turn when they crash into a critical point of $G$. For each $\theta \in \TT :=\RR/\ZZ$ the polynomial $P$ has either a smooth ray $R_\theta$ or a pair $R^\pm_{\theta}$ of broken rays which descend from $\infty$ at the normalized angle $\theta$. Here $R^+_{\theta}$ (resp. $R^-_{\theta}$) makes a right (resp. left) turn at each critical point of $G$ it crashes into (see \S \ref{sec:prelim} for details). \vs

Now suppose $z_0$ is a repelling or parabolic periodic point of $P$ with period $k$. Denote by $\La(z_0)$ the set of angles $\theta \in \TT$ for which $R_\theta$ or one of $R^\pm_{\theta}$ lands at $z_0$. Then $\La(z_0)$ is a non-empty ``rotation set'' under the $k$-th iterate of the map $\whD: \theta \mapsto D\theta \modd$. Moreover, the following dichotomy holds depending on the rotation number $\rho$ of the restriction $\whD^{\circ k}|_{\La(z_0)}$: Either $\rho$ is rational in which case $\La(z_0)$ is a union of finitely many cycles of the same length, or $\rho$ is irrational in which case $\La(z_0)$ is a minimal Cantor set. The first alternative is guaranteed to happen if the connected component of $K_P$ containing $z_0$ is non-degenerate (see \cite{LP} and compare \cite{PZ}). \vs

This note will sharpen the above statement by proving the following \vs  

\noindent
{\bf Main Theorem.} {\it Let $P: \CC \to \CC$ be a polynomial map with disconnected filled Julia set $K_P$ and let $z_0$ be a repelling or parabolic periodic point of $P$. If the connected component $K$ of $K_P$ containing $z_0$ is non-degenerate, then $z_0$ is the landing point of at least one {\em smooth} external ray.} \vs 

To reiterate, here is the equivalent (and a bit curious-sounding) formulation:  If every external ray landing on a repelling or parabolic point $z_0$ is broken, then the singleton $\{ z_0 \}$ is a connected component of the filled Julia set. \vs 

The non-degeneracy assumption on the component $K$ is essential. For example, the quadratic map $P(z)=z^2+c$ with real $c>1/4$ has a pair of complex conjugate fixed points $z^\pm_0$, and the components $K^\pm$ of $K_P$ containing $z^\pm_0$ are $\{ z^\pm_0 \}$ since $K_P$ is a Cantor set. If we label these fixed points so that $\operatorname{Im}(z^+_0)>0$ and $\operatorname{Im}(z^-_0)<0$, then $z^\pm_0$ is the landing point of the unique infinitely broken ray $R^\pm_0$ \cite{GM}. Note however that the Main Theorem holds trivially if $K= \{ z_0 \}$ but $\La(z_0)$ is infinite, for in this case $\La(z_0)$ is a Cantor set and since there are countably many broken rays, most rays landing at $z_0$ must be smooth. \vs

The assertion of the Main Theorem is optimal in that all but one ray landing at $z_0$ may in fact be broken. For example, \figref{seh} illustrates the disconnected filled Julia set of a cubic polynomial with a repelling fixed point $z_0$ at which the rays $R_0, R^+_{1/2}$ co-land. More generally, using the technique of quasiconformal surgery, one can construct polynomials of any degree $D \geq 3$, with disconnected filled Julia set, for which the $D-1$ fixed rays of $\whD$ co-land at a repelling fixed point in such a way that $D-2$ of these rays are broken and only one is smooth \cite[Theorem D and \S 7]{PZ}. 

\begin{figure}[t!]
	\centering
	\begin{overpic}[width=0.6\textwidth]{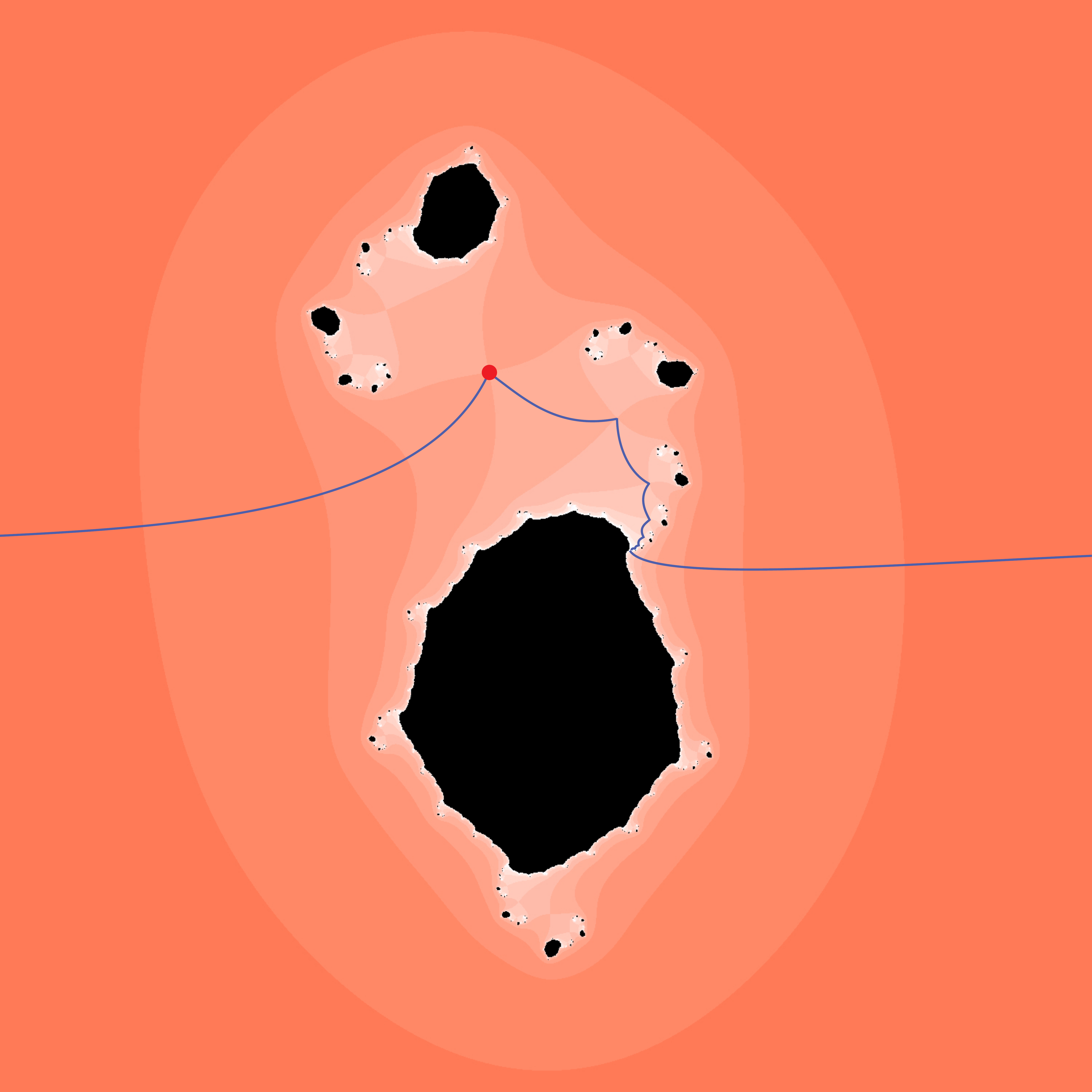}
		\put (94,51) {\small $R_0$}
		\put (1,47) {\small $R^+_{1/2}$}
		\put (53.5,47) {\small {\color{white} $z_0$}}
		\put (44.5,67.5) {\small $\omega$}
		\put (48,36) {{\color{white} $K$}}
	\end{overpic}
	\caption{\sl{Filled Julia set of the cubic polynomial $P(z)=a z^2+z^3$ with $a \approx 0.31629-i 1.92522$. The fixed rays $R_0, R^+_{1/2}$ co-land at the fixed point $z_0$. Here $R_0$ is smooth but $R^+_{1/2}$ is infinitely broken at the escaping critical point $\omega$ and its preimages.}} 
	\label{seh}   
\end{figure}

\section{Preliminaries}\label{sec:prelim}

We assume that the reader is familiar with basic complex dynamics, as in \cite{M}. Most of the following material on generalized rays can be found in greater detail in \cite{PZ}. \vs 

\noindent
{\it Convention.} For points $a,b$ on the circle $\TT=\RR/\ZZ$ we use the notation $]a,b[$ for the open interval in $\TT$ traversed counterclockwise from $a$ to $b$. This makes sense even if $a=b$ in which case $]a,a[= \TT \sm \{ a \}$.  

\subsection{The Green's function}
 
Let $P:\CC \to \CC$ be a monic polynomial map of degree $D \geq 2$. The {\bit filled Julia set} $K_P$ is the union of all bounded orbits of $P$. It is a compact non-empty subset of $\CC$ with connected complement. The complement $\CC \sm K_P$ is the {\bit basin of infinity} of $P$, that is, the set of all points which escape to $\infty$ under the iterations of $P$. The {\bit Green's function}\index{Green's function} of $P$ is the continuous function $G: \CC \to [0,+\infty[$ defined by 
$$
G(z):=\lim_{n \to \infty} \frac{1}{D^n} \log^+ |P^{\circ n}(z)|,
$$
where $\log^+ t = \max \{ \log t, 0 \}$. It measures the rate at which points escape to $\infty$ under the iterations of $P$ and satisfies the relation
$$
G(P(z))=D \, G(z) \qquad \text{for all} \ z \in \CC,
$$
with $G(z)=0$ if and only if $z \in K_P$. The Green's function is subharmonic on $\CC$ and harmonic in $\CC \sm K_P$ with a logarithmic singularity at $\infty$.\footnote{Thus, the restriction of $G$ to $\CC \sm K_P$ coincides with the Green's function of this basin in the sense of classical potential theory.} We refer to $G(z)$ as the {\bit potential} of $z$. \vs  

The critical points of $G$ outside $K_P$ are precisely the escaping (pre)critical points of $P$, that is, $\nabla G(z)=0$ for some $z \in \CC \sm K_P$ if and only if $P^{\circ n}(z)$ is a critical point of $P$ for some $n \geq 0$. To make a clear distinction between a critical point of $P$ and a critical point of $G$ in $\CC \sm K_P$, we refer to the latter as a {\bit singular point} or {\bit singularity}. It is easy to see that for every $s>0$ there are at most finitely many singularities at potentials higher than $s$. The open set $G^{-1}([0,s[)$ has finitely many connected components, all Jordan domains with piecewise analytic boundaries. \vs

The {\bit order} of a singularity $\om$ is by definition the integer $\prod_{n=0}^{\infty} \deg(P,P^{\circ n}(\omega)) \geq 2$, where $\deg(P,\cdot)$ denotes the local degree and the tail of the infinite product is $1$ since $P^{\circ n}(\omega)$ is non-critical for all large $n$.\footnote{Alternatively, $\ord(\omega)=1-\iota$, where $\iota$ is the Poincar\'e index of the vector field $\nabla G$ at its singular point $\omega$.} A singularity of order $m$ has $m$ local stable and $m$ local unstable manifolds that alternate around it and  asymptotically make an angle of $\pi/m$ to each other (see \figref{cp} left for the case $m=3$). \vs

The {\bit B\"{o}ttcher coordinate} of $P$ is the unique conformal isomorphism $B$ defined near $\infty$, normalized so that $\lim_{z \to \infty} B(z)/z = 1$, which conjugates $P$ to the power map $w \mapsto w^D$: 
$$
B(P(z))=(B(z))^D \qquad \text{for large} \ |z|.  
$$
It is related to the Green's function by the relation 
$$
\log |B(z)|=G(z) \qquad \text{for large} \ |z|. 
$$
A straightforward computation based on this relation shows that near infinity the normalized gradient $\nabla G / \| \nabla G \|^2$ is pushed forward by $B$ to the radial vector field $x \, \bd/\bd x + y \, \bd/\bd y$.  

\subsection{Smooth and broken external rays}\label{sbr}

For $\theta \in \TT$, we denote by $R_\theta$ the maximally extended smooth field line of $\nabla G/\| \nabla G \|^2$ given by $s \mapsto B^{-1}(\e^{s+2\pi i \theta})$ for large $s$. This naturally parametrizes $R_\theta$ by the potential, so for each $\theta$ there is an $s_\theta \geq 0$ such that $G(R_\theta(s)) = s$ for all $s > s_\theta$. The field line $R_\theta$ either extends all the way to the Julia set $\bd K_P$ in which case $s_\theta=0$, or it crashes into a singularity $\om$ at the potential $s_\theta>0$ in the sense that $\lim_{s \downarrow s_{\theta}} R_\theta(s)=\om$. \vs 

Here is a summary of the basic properties of the potentials $s_\theta$ (see \cite[\S 2]{PZ}): \vs

\begin{enumerate}[leftmargin=*]
\item[I.]
The function $\theta \mapsto s_\theta$ is upper semicontinuous on $\TT$. In fact, for every $s>0$ the set $\{ \theta \in \TT: s_\theta \geq s \}$ is finite. \vs 
\item[II.] 
$P(R_\theta(s))=R_{\whD(\theta)}(Ds)$ if $s>s_\theta$. As a result, we have the inequality $s_{\whD(\theta)} \leq D s_\theta$ for all $\theta \in \TT$. Equality holds if and only if $P(R_\theta(s_\theta))$ is a singular point. \vs
\item[III.]
The set $\mathcal N$ of angles $\theta \in \TT$ for which $s_\theta>0$ is countable, dense and backward-invariant under $\whD$. Moreover, ${\mathcal N} = \bigcup_{n \geq 0} \whD^{-n}({\mathcal N}_0)$, where ${\mathcal N}_0$ is the finite set of $\theta \in \TT$ for which $R_\theta$ crashes into a critical point of $P$. \vs 
\end{enumerate}

\begin{figure}[t]
\centering
\begin{overpic}[width=\textwidth]{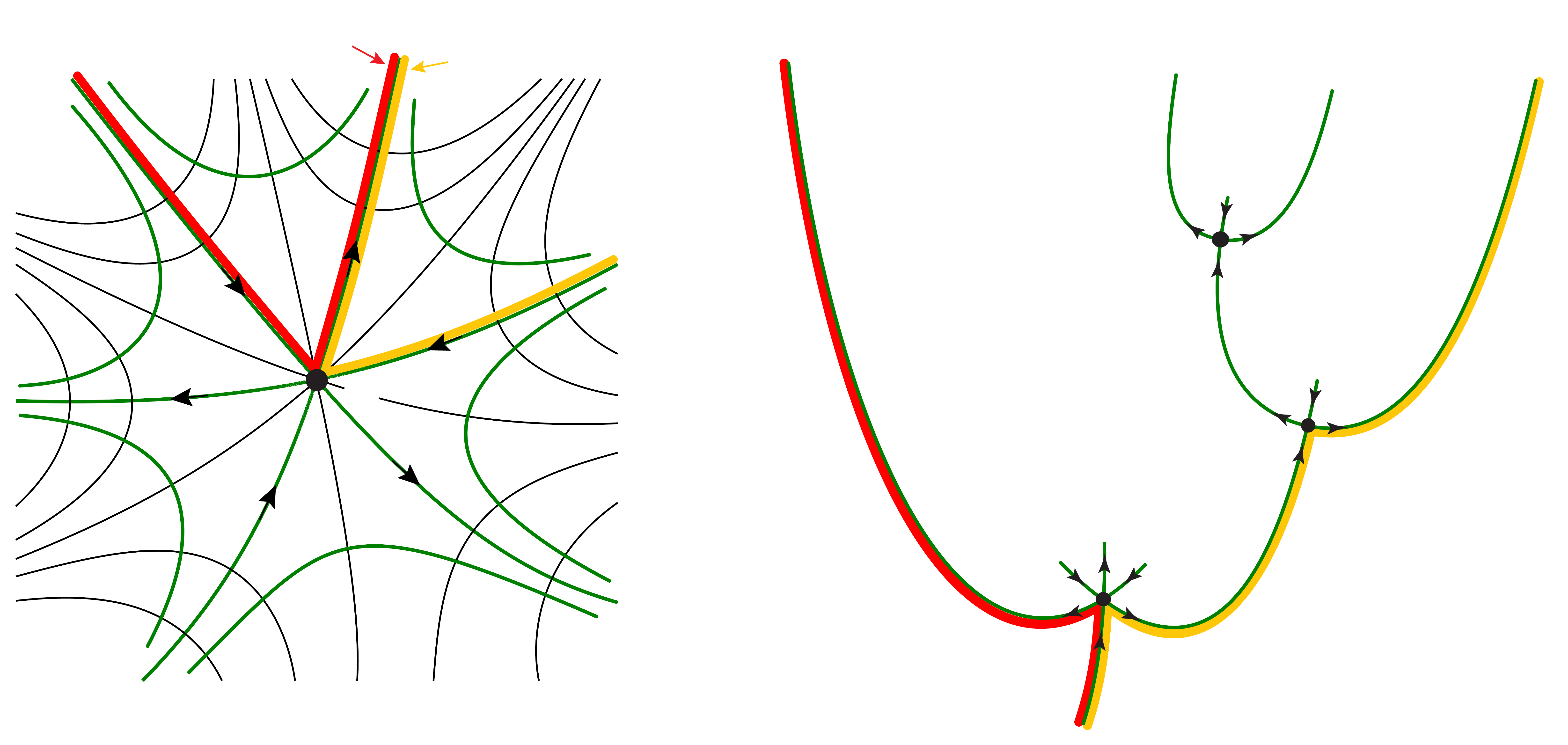}
\put (19,44) {\small $R^+_\theta$}
\put (29,43) {\small $R^-_\theta$}
\put (22.2,22) {\small $\omega$}
\put (74,30.5) {$\omega_1$}
\put (84,17.5) {$\omega_2$}
\put (71,5.5) {$\omega_3$}
\put (97,44) {\small $R^-_a$}
\put (84,43) {\small $R^-_b$}
\put (74,44) {\small $R^+_c$}
\put (46,42) {\small $R^+_d$}
\end{overpic}
\caption{\sl Left: Field lines and equipotentials of the Green's function near a singularity $\omega$ of order $3$. Each of the three local unstable manifolds can be extended past $\omega$ by turning to the immediate right or left and continuing along the corresponding stable manifold. Right: Illustration of the notion of partnership at a singularity (\S \ref{partn}). Here the partners are $R^-_b,R^+_c$ at  $\omega_1$, $R^-_a,R^+_c$ at $\omega_2$ and $R^-_a,R^+_d$ at $\omega_3$.}  
\label{cp}
\end{figure}

When $\theta \notin {\mathcal N}$, the field line $R_\theta$ is called the {\bit smooth ray} at angle $\theta$. When $\theta \in {\mathcal N}$, the field line $R_\theta$ is defined only for $s > s_\theta$ and there is more than one way to extend it to a curve consisting of field lines and singularities on which $G$ defines a homeomorphism onto $]0,+\infty[$. But there are always two special choices: $R_\theta^+$ which turns immediate right and $R_\theta^-$ which turns immediate left at each singularity met during the descent, i.e., moving backward in the direction of decreasing potential (\figref{cp} left). We call these extensions the {\bit right} and {\bit left broken rays} at angle $\theta$, respectively. These broken rays are also parametrized by the potential, so $R^{\pm}_{\theta}(s)$ make sense for all $s>0$, and $R^+_{\theta}(s)=R^-_{\theta}(s)=R_\theta(s)$ for $s > s_\theta$. \vs 

For potentials $s<s_\theta$ the points $R^{\pm}_\theta(s)$ belong to different connected components of $G^{-1}([0,s_\theta[)$, so the restrictions of the curves $s \mapsto R^\pm_\theta(s)$ to $]0,s_\theta[$ are disjoint. \vs

In what follows by a {\bit ray at angle} $\theta$ we mean the smooth ray $R_\theta$ when $\theta \notin {\mathcal N}$ or either of the broken rays $R^\pm_\theta$ when $\theta \in {\mathcal N}$. Each compact piece of a ray is the one-sided uniform limit of the corresponding smooth pieces of the nearby rays in the following sense: Take $\theta_0 \in \TT$ and fix $0<c<1$. By the property (I) above, $s_\theta < c$ for all $\theta$ in a deleted neighborhood of $\theta_0$. If $\theta_0 \notin {\mathcal N}$, then 
$$
\lim_{\theta \uparrow \theta_0} R_{\theta}(s) =  
\lim_{\theta \downarrow \theta_0} R_{\theta}(s) = R_{\theta_0}(s)
$$
uniformly for $s \in [c,1/c]$. On the other hand, if $\theta_0 \in {\mathcal N}$, then    
$$
\lim_{\theta \uparrow \theta_0} R_{\theta}(s) = R_{\theta_0}^-(s) \quad \text{and} \quad 
\lim_{\theta \downarrow \theta_0} R_{\theta}(s) = R_{\theta_0}^+(s)
$$
uniformly for $s \in [c,1/c]$. It follows from this and the property (II) that
\begin{align*}
P(R_\theta) & = R_{\whD(\theta)} & & \text{if} \ \theta \notin {\mathcal N} \\
P(R_\theta^\pm) & = R_{\whD(\theta)}^\pm & & \text{if} \ \theta \in {\mathcal N} \ \text{and} \ \whD(\theta) \in {\mathcal N} \\
P(R_\theta^\pm) & = R_{\whD(\theta)} & & \text{if} \ \theta \in {\mathcal N} \ \text{and} \ \whD(\theta) \notin {\mathcal N}. 
\end{align*}

\subsection{Full extensions of field lines}\label{fe}

Any local field line $\gamma$ of $\nabla G/\| \nabla G \|^2$ is contained in either a unique smooth ray or a unique pair of left and right broken rays. This can be seen by simply extending $\gamma$ in forward time (i.e., in the direction of increasing potential). Either $\gamma$ extends all the way to $\infty$ without ever hitting a singularity, or it can be extended past every singularity it meets by always turning immediate right or always turning immediate left. After finitely many such choices we eventually reach the smooth part of a field line which can be further extended to $\infty$. In the first case there is a unique $\theta$ such that $\gamma \subset R_\theta$ if $\theta \notin {\mathcal N}$ and $\gamma \subset R^-_\theta \cap R^+_\theta$ if $\theta \in {\mathcal N}$. In the second case there are unique and distinct $\theta_1, \theta_2 \in {\mathcal N}$ such that $\gamma \subset R^-_{\theta_1} \cap R^+_{\theta_2}$. We call these rays the {\bit full extensions} of $\gamma$. 

\subsection{Partnership at a singularity}\label{partn}

Let $\omega$ be a singular point at potential $s:=G(\omega)$. Two rays $R^-_a,R^+_b$ with $R^-_a(s)=R^+_b(s)=\omega$ are said to be {\bit partners at} $\omega$ if they are disjoint for potentials in $]s,+\infty[$ but coincide on $[s-\ve,s]$ for some $\ve>0$. In other words, $R^-_a,R^+_b$ must crash into $\omega$ along different unstable manifolds but bounce off along the same stable manifold (see \figref{cp} right). \vs

Suppose that the singular point $\omega$ has order $m \geq 2$ and $\gamma$ is one of the $m$ local stable manifolds at $\omega$. Let $R^-_a, R^+_b$ be the full extensions of $\gamma$ as defined in \S \ref{fe}. It is easy to see that $R^-_a, R^+_b$ are partners at $\omega$ and one of the following must occur: \vs

\begin{enumerate}[leftmargin=*]
\item[$\bullet$] 
$P(\omega)$ is non-singular and therefore contained in a unique smooth ray $R_{a'}$ or a unique pair $R^-_{a'}, R^+_{b'}$ (allowing the possibility $a'=b'$). In the first case $P(R^-_a)=P(R^+_b)=R_{a'}$, hence $\whD(a)=\whD(b)=a'$. In the second case $P(R^-_a)=R^-_{a'}, P(R^+_b)=R^+_{b'}$, hence $\whD(a)=a', \whD(b)=b'$. \vs 

\item[$\bullet$] 
$P(\omega)$ is singular of order $m/\deg(P,\omega)$. Then $\gamma':=P(\gamma)$ is a local stable manifold at $P(\omega)$. If $R^-_{a'}, R^+_{b'}$ denote the full extensions of $\gamma'$, then the rays $R^-_{a'}, R^+_{b'}$ are partners at $P(\omega)$ and $P(R^-_a)=R^-_{a'}, P(R^+_b)=R^+_{b'}$, hence $\whD(a)=a'$ and $\whD(b)=b'$.  
\end{enumerate}
 
\subsection{Periodic points and rays}\label{ppee}

Suppose $\theta$ is periodic with period $q \geq 1$ under $\whD$. Then $P^{\circ q}(R_\theta)=R_\theta$ or $P^{\circ q}(R^{\pm}_\theta)=R^{\pm}_\theta$ according as $\theta \notin {\mathcal N}$ or $\theta \in {\mathcal N}$. In the latter case, it follows from the relation 
$$
P^{\circ q}(R^{\pm}_\theta(s))=R^{\pm}_\theta(D^q s)
$$
for $s>0$ that the broken ray $R^{\pm}_\theta$ contains the infinite sequence $\{ R^{\pm}_\theta(s_\theta/D^{nq}) \}_{n \geq 0}$ of singular points and therefore is infinitely broken. If another ray at angle $\theta'$ meets $R^{\pm}_\theta$ at some point $z$, then a ray at angle $\whD^{\circ nq}(\theta')$ meets $P^{\circ nq}(R^{\pm}_{\theta})=R^{\pm}_{\theta}$ at $P^{\circ nq}(z)$ whose potential $D^{nq}G(z)$ is greater than $s_{\theta}$ if $n$ is sufficiently large. This implies $\whD^{\circ nq}(\theta')=\theta$ and proves that $\theta'$ is strictly pre-periodic. As a byproduct of this observation, we conclude that {\it distinct periodic rays are always disjoint}. \vs

For $z_0 \in K_P$, let $\La(z_0)$ denote the set of $\theta \in \TT$ for which a ray at angle $\theta$ lands at $z_0$.

\begin{theorem}\label{perr}
\mbox{}
\begin{enumerate}[leftmargin=*]
\item[(i)]
If $\theta$ has period $q$ under $\whD$, then $\theta \in \La(z_0)$ for some repelling or parabolic point $z_0 \in K_P$ whose period divides $q$. \vs
\item[(ii)]
If $z_0 \in K_P$ is a repelling or parabolic point with period $k$ under $P$, then $\La(z_0)$ is non-empty. Moreover, if the component $K$ of $K_P$ containing $z_0$ is non-degenerate, then $\La(z_0)$ is finite and all its elements have the same period under $\whD$ which is a multiple of $k$.    
\end{enumerate}
\end{theorem}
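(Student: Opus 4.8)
The plan is to prove the two parts more or less together, since they are two sides of the same correspondence between periodic angles and periodic points. For part (i), start with $\theta$ periodic of period $q$ under $\whD$. The ray at angle $\theta$ (smooth or one of the broken pair $R^\pm_\theta$) is then $P^{\circ q}$-invariant, as recorded in \S\ref{ppee}. First I would show this ray lands, i.e. that the accumulation set of the ray on $\partial K_P$ is a single point. The standard Douady--Hubbard argument applies: parametrize the ray by potential, push forward by $P^{\circ q}$ to get $R(s/D^{qn}) \mapsto R(s)$, and use a hyperbolic-contraction / shortest-geodesic estimate in $\CC \sm K_P$ (or the length-area / Green's-function-gradient estimate) to see that the pieces of the ray between successive potentials $D^{-qn}$ and $D^{-q(n+1)}$ have Euclidean diameters summing to a finite value, so the ray converges to a point $z_0 \in \partial K_P$. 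Since $P^{\circ q}$ maps the ray to itself fixing the two ends, $P^{\circ q}(z_0)=z_0$, so $z_0$ is periodic with period dividing $q$. Then the usual Snail-type lemma (as in \cite{M}) shows a periodic point that is a landing point of a periodic ray cannot be attracting, Cremer, or Siegel — the multiplier cannot have modulus $<1$ (the ray would have to wind into it, contradicting the potential parametrization), cannot be irrationally indifferent (linearization would force the ray into a Siegel disk or spiral), and Cremer points are not accessible — hence $z_0$ is repelling or parabolic. This gives (i), with $\theta \in \La(z_0)$.

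For the first assertion of part (ii) — that $\La(z_0) \neq \emptyset$ for every repelling or parabolic periodic $z_0$ — I would invoke accessibility: a repelling or parabolic periodic point lies on the boundary of its basin complement and is accessible from $\CC \sm K_P$ (for repelling points via a local linearizing coordinate where one builds an access path along a spiral of bounded turning; for parabolic points via a path inside an attracting petal exiting to the escape region along a field line). An accessible boundary point is the landing point of some external ray by the Lindelöf-type / prime-end argument applied to the Green's function, possibly after passing to a broken ray when the naive field line crashes into a singularity; so some ray at some angle $\theta$ lands at $z_0$, i.e. $\La(z_0)\neq\emptyset$. Then $z_0$ periodic of period $k$ forces, after applying $P^{\circ k}$, that $\whD^{\circ k}$ permutes $\La(z_0)$; combined with (i) this already shows every $\theta \in \La(z_0)$ is eventually periodic, but since a ray landing at a periodic point must itself be periodic (a strictly preperiodic ray lands at a strictly preperiodic point, as the landing point map is equivariant and injective on rays — two distinct periodic rays are disjoint by \S\ref{ppee}), every element of $\La(z_0)$ is periodic, with period a multiple of $k$ because $\whD^{\circ k}$ fixes the cyclic structure.

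The substantive new content is the finiteness of $\La(z_0)$ and the common-period statement under the non-degeneracy hypothesis on the component $K \ni z_0$. Here is where I expect the main obstacle. The idea is that since $K$ is non-degenerate, it is a nondegenerate continuum with a well-defined ``conformal'' structure at infinity: there is a largest potential $s^* = s^*(K) > 0$ such that the component $U$ of $G^{-1}([0,s^*[)$ containing $K$ contains no singular point in its interior at potentials $< s^*$, and on $U \sm K$ the Green's function has no critical points, so $U$ is a topological annulus and carries a Riemann-map / Böttcher-type coordinate $\Phi: U \sm K \to \{1 < |w| < e^{s^*}\}$ semiconjugating $P^{\circ k}$ (suitably interpreted on the relevant level) to $w \mapsto w^{D'}$ for the appropriate local degree $D'$. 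All rays landing at $z_0$, once restricted to potentials below $s^*$, are honest smooth field lines inside this annulus and correspond to genuine radii; the landing point $z_0$ is then a fixed point of the associated circle map, and the set of radii landing at it is a finite rotation set of a degree-$D'$ circle endomorphism — finite because a non-degenerate $K$ has only finitely many ``external angles'' pointing at any one of its boundary points, by the standard argument that between two consecutive such rays there is definite harmonic measure / a definite sub-arc of $\partial K$. The delicate point, and the one I would spend the most care on, is relating the global combinatorics of the possibly-infinitely-broken rays $R^\pm_\theta$ (which live in all of $\CC \sm K_P$ and wander among many components of sublevel sets) to the clean finite combinatorics inside the single annulus $U \sm K$: one must check that the map $\theta \mapsto (\text{low-potential tail of the ray at } \theta)$ is well-defined and finite-to-one on $\La(z_0)$, that it intertwines $\whD^{\circ k}$ with the circle map of $\Phi$, and that no two angles in $\La(z_0)$ can have the same low-potential tail without being equal (disjointness of periodic rays again). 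Once that dictionary is in place, finiteness and the common-period conclusion follow from the elementary theory of rotation sets of $w \mapsto w^{D'}$, exactly as in the connected-Julia-set case.
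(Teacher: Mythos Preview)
The paper does not actually prove this theorem; it only records that part (i) is the classical Sullivan--Douady--Hubbard hyperbolic-metric argument (as in \cite[Theorem 18.10]{M}) and that part (ii) is taken from \cite{LP} and \cite[Theorem 6.5]{PZ}. Your sketch of (i) is exactly that argument and is fine.

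Your plan for (ii), however, contains a genuine error. You assert the existence of a potential $s^*>0$ such that the component $U=V_{s^*}$ of $G^{-1}([0,s^*[)$ containing $K$ has no singularities of $G$, so that $U\sm K$ is a clean annulus on which the rays are smooth radii. No such $s^*$ exists. Singularities of $G$ are the escaping (pre)critical points of $P$, and the backward orbit of any escaping critical point is dense in $\bd K_P\supset \bd K$; hence singularities accumulate on $K$ and every $V_s$ contains infinitely many of them. Indeed, the very rays that make the Main Theorem nontrivial are \emph{infinitely} broken with break points $\omega_n\to z_0\in K$ (see \S\ref{sec:proof}), so their low-potential tails are not smooth field lines in any neighborhood of $K$. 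Consequently there is no ``B\"ottcher coordinate for $P$'' on $U\sm K$, and the dictionary you hope to set up between global broken rays and clean local radii cannot be built from the Green's function $G$ of $P$ alone.

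What replaces your $s^*$ argument is the straightening theorem: one uses the polynomial-like restriction $P^{\circ k}|_{U_1}$ and a quasiconformal hybrid conjugacy $\varphi$ to a degree-$d$ polynomial $\tilde P$ with \emph{connected} filled Julia set. The B\"ottcher picture you want lives on the $\tilde P$ side, where the classical finiteness/common-period result applies to $\varphi(z_0)$; the real work (carried out in \cite{PZ} and summarized here as \thmref{A}) is constructing the semiconjugacy $\Pi:I\to\TT$ that matches accumulation sets of $R^K_\theta$ with those of $\tilde R_{\Pi(\theta)}$, and then pulling the finite rotation-set structure back to $\La(z_0)$. Your last paragraph gestures at this dictionary but misidentifies where it lives: it is a correspondence between rays of $P$ and rays of the straightened polynomial $\tilde P$, not a coordinate on $U\sm K$ for $P$ itself.
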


Part (i) follows from a classical application of hyperbolic metrics introduced
by Sullivan, Douady and Hubbard and is similar to the case of connected Julia sets, as in \cite[Theorem 18.10]{M}. For part (ii), see \cite[Theorem 1]{LP} and compare \cite[Theorem 6.5]{PZ}. \vs

It follows from the above remarks that each cycle of rays landing at a periodic point consists either entirely of smooth rays, or entirely of infinitely broken rays.  

\subsection{Rays accumulating on a component of $K_P$} 

Let $K$ be a {\it non-degenerate} connected component of the filled Julia set $K_P$ which is periodic with period $k \geq 1$. There are topological disks $U_1,U_0$ containing $K$ such that the restriction $P^{\circ k}|_{U_1}: U_1 \to U_0$ is a polynomial-like map of some degree $2 \leq d<D$ with connected filled Julia set $K$. Here $d$ is one more than the number of critical points of $P$ in $K$ counting multiplicities. The restriction $P^{\circ k}|_{U_1}$ is hybrid equivalent to a polynomial $\tilde{P}$ of degree $d$, that is, there is a quasiconformal map $\varphi : U_0 \to \varphi(U_0)$ which satisfies $\varphi \circ P^{\circ k} = \tilde{P} \circ \varphi$ in $U_1$ and has the property that $\bar{\bd} \varphi=0$ a.e. on $K$ \cite{DH}. It follows that $\varphi(K)$ is the filled Julia set $K_{\tilde{P}}$. The polynomial $\tilde{P}$ is unique up to affine conjugation. \vs

Let $I=I_K$ be the set of $\theta \in \TT$ for which a ray at angle $\theta$ accumulates on the component $K$. We will denote this ray by $R^K_\theta$ (which is precisely one of $R_\theta$, $R^-_\theta$, or $R^+_\theta$). The smooth external rays of $\tilde{P}$ will be denoted by $\tilde{R}_\theta$. \vs

The following is a distilled combination of two theorems in \cite{PZ}:     

\begin{theorem}\label{A}
\mbox{}
\begin{enumerate}[leftmargin=*]
\item[(i)]	
The set $I$ is compact, invariant under $\whD^{\circ k}$, and of Hausdorff dimension $<1$. Moreover, there is an essentially unique continuous degree $1$ monotone surjection $\Pi: I \to \TT$ which makes the following diagram commute:
$$
	\begin{tikzcd}[column sep=small]
	I \arrow[d,swap,"\Pi"] \arrow[rr,"\whD^{\circ k}"] & & I \arrow[d,"\Pi"] \\
	\TT \arrow[rr,"\whd"] & & \TT 
	\end{tikzcd} 
$$
\item[(ii)]
For any hybrid conjugacy $\varphi : U_0 \to \varphi(U_0)$ between the restriction $P^{\circ k}|_{U_1}: U_1 \to U_0$ and a degree $d$ monic polynomial $\tilde{P}$ there is a choice of the semiconjugacy $\Pi$ as in (i) such that 
$R^K_\theta$ and $\varphi^{-1}(\tilde{R}_{\Pi(\theta)})$ have the same accumulation set for all $\theta \in I$. In particular, $R^K_\theta$ lands at $z \in K$ if and only if $\tilde{R}_{\Pi(\theta)}$ lands at $\varphi(z) \in K_{\tilde{P}}$.
\end{enumerate}
\end{theorem}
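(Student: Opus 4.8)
\textbf{Plan for proving Theorem~\ref{A}.}

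The plan is to transfer the picture of external rays of $P$ accumulating on the polynomial-like Julia set $K$ to the picture of external rays of the straightening $\tilde P$, using the Böttcher/Green's function machinery in the fundamental annulus between the two disks $U_1 \subset U_0$. First I would set up the relevant Green's function: let $G_0$ be the Green's function of the polynomial-like map $P^{\circ k}|_{U_1}:U_1\to U_0$ with pole at $K$ (equivalently, the classical Green's function of the domain $U_0\setminus K$ normalized along $\bd U_0$), and observe that after passing to $\varphi$ it agrees with the Green's function $\tilde G$ of $\tilde P$ restricted to $\varphi(U_0)$. The key external observation is that for $\theta\in I$, once the ray $R^K_\theta$ of $P$ first enters $U_0$ (at a potential $s_\theta^K$ which may be positive because of singularities encountered during the descent from $\infty$, or because $R^K_\theta$ is one of the branches $R^\pm_\theta$), its continuation inside $U_0\setminus K$ is a field line of $\nabla G_0/\|\nabla G_0\|^2$; this gives a well-defined point $\iota(\theta)$ on a fixed equipotential $\{G_0=s_*\}\subset U_0$. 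The assignment $\theta\mapsto \iota(\theta)$ lands in the circle $\{G_0=s_*\}$, and composing with the Böttcher coordinate of $\tilde P$ and retracting to angles defines the candidate semiconjugacy $\Pi:I\to\TT$.

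Next I would establish the structural properties of $I$ and $\Pi$. Compactness of $I$ and invariance under $\whD^{\circ k}$ follow from the ray transport formulas $P(R_\theta)=R_{\whD(\theta)}$, $P(R^\pm_\theta)=R^\pm_{\whD(\theta)}$ (or $R_{\whD(\theta)}$) recalled in \S\ref{sbr}, together with the fact that $K$ is $P^{\circ k}$-invariant; upper semicontinuity of $\theta\mapsto s_\theta$ (property (I)) handles the passage to limits. That $\Pi$ is monotone of degree $1$ comes from the fact that distinct field lines in $U_0\setminus K$ do not cross and descend from $\bd U_0$ in cyclic order matching their external angles; the commutation $\Pi\circ\whD^{\circ k}=\whd\circ\Pi$ is immediate from the conjugacy $\varphi\circ P^{\circ k}=\tilde P\circ\varphi$ together with the fact that $\tilde P$ pushes forward field lines of $\tilde G$ to field lines and multiplies Böttcher angles by $d$. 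Essential uniqueness of $\Pi$ follows because any two degree-$1$ monotone semiconjugacies of $\whD^{\circ k}|_I$ to $\whd$ agree on the (dense in $I$) set of $\whD^{\circ k}$-periodic angles, where the semiconjugacy value is forced. The Hausdorff dimension bound $\dim_H I<1$ I would cite from \cite{PZ}: morally, $I$ is the pullback under the external map of a uniformly expanding Cantor repeller (the set carrying $\whd$'s combinatorics) and the expansion of $\whd$ is faster than a bounded distortion version of the native dynamics, giving dimension strictly less than $1$; this is the one place I would lean hardest on the cited source rather than reprove it.

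For part (ii) I would argue that $\varphi$ carries the accumulation set of $R^K_\theta$ in $K$ to the accumulation set in $K_{\tilde P}$ of the field line of $\tilde G$ through the point $\varphi(\iota(\theta))$, and that this field line is exactly the external ray $\tilde R_{\Pi(\theta)}$ by the definition of $\Pi$. The only subtlety is that $\varphi$ is merely quasiconformal, not conformal, away from $K$, so it need not send field lines of $G_0$ to field lines of $\tilde G$; however, it is a homeomorphism of $U_0$ onto $\varphi(U_0)$ fixing $K$ setwise and conjugating the two polynomial-like dynamics, hence it maps the grand orbit of a field line to the grand orbit of a field line, and since accumulation sets in $K$ depend only on the tail of the descent (which is pinned down dynamically, not metrically) the accumulation sets correspond. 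Concretely: $\omega(R^K_\theta)\subset K$ and $\omega(\tilde R_{\Pi(\theta)})\subset K_{\tilde P}$ are both characterized by the itinerary of the ray with respect to the Markov partition of a neighborhood of the Julia set given by the components of $\{G_0<s_*\}$ resp.\ $\{\tilde G<s_*\}$, and $\varphi$ matches these partitions and itineraries by construction; landing at a single point is then equivalent on both sides. I expect the main obstacle to be precisely this last point --- making rigorous that the quasiconformal, non-conformal nature of $\varphi$ outside $K$ does no harm --- which is why the formulation is in terms of \emph{accumulation sets} rather than pointwise equality of rays, and why one must choose $\Pi$ to depend on $\varphi$; getting the bookkeeping of singularities/broken branches of $R^K_\theta$ during the descent into $U_0$ consistent with this choice is the delicate part.
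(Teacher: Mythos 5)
First, a point of orientation: the paper does not prove Theorem~\ref{A} at all --- it is stated as ``a distilled combination of two theorems in \cite{PZ}'' and used as a black box. So there is no in-paper proof to compare against; what can be assessed is whether your sketch would plausibly reconstruct the cited result, and here there is a genuine gap at the very first step. You claim that once $R^K_\theta$ enters $U_0$, ``its continuation inside $U_0\setminus K$ is a field line of $\nabla G_0/\|\nabla G_0\|^2$,'' where $G_0$ is the Green's function of the polynomial-like restriction (equivalently of $U_0\setminus K$). This is false. The ray $R^K_\theta$ is a gradient line of the \emph{global} Green's function $G$ of $P$, and $G\neq G_0$ on $U_0\setminus K$: the domain $U_0$ in general contains infinitely many other components of $K_P$ (points bounded under $P$ but escaping $U_1$ under $P^{\circ k}$), $G$ vanishes on all of them while $G_0$ vanishes only on $K$, and their boundary behaviors on $\bd U_0$ differ as well. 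Hence the two gradient flows are genuinely different, your map $\iota$ (first hitting point on an equipotential of $G_0$) is not even well defined along the ray, and the candidate $\Pi$ built from it has no basis; the subsequent claims about monotonicity, the semiconjugacy identity, and degree $1$ all rest on this false identification. This mismatch between the ambient external rays of $P$ accumulating on $K$ and the intrinsic rays of the polynomial-like map (or of its straightening) is precisely the difficulty that \cite{PZ} exists to overcome --- there the correspondence is extracted by a careful combinatorial/geometric shadowing argument across the nested domains $V_s$, not by identifying the two field-line structures.

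The same problem undermines part (ii) as you present it. Since $\varphi$ is only quasiconformal off $K$, it does not carry gradient lines of anything to gradient lines of $\tilde G$, and your fallback --- that accumulation sets are ``characterized by the itinerary of the ray with respect to the Markov partition'' given by components of sublevel sets --- is not a proof and is not true without substantial work: an external ray can accumulate on a large subset of $K$, and its accumulation set is not determined by a symbolic itinerary in any obvious sense; showing that the accumulation set of $R^K_\theta$ coincides with that of $\varphi^{-1}(\tilde R_{\Pi(\theta)})$ (rather than merely meeting the same nested pieces) is exactly the delicate content of the cited theorems. Your structural remarks (compactness and $\whD^{\circ k}$-invariance of $I$ from the transport formulas and upper semicontinuity of $\theta\mapsto s_\theta$, and deferring $\dim_H I<1$ to \cite{PZ}) are reasonable, but the core of the statement --- the existence of $\Pi$ and the matching of accumulation sets --- is not recovered by the proposed argument.
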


Observe that $I$ is nowhere dense since it has Hausdorff dimension $<1$ and it is uncountable since it maps continuously onto $\TT$. However, $I$ may have isolated points, so in general it is not a Cantor set \cite{PZ}. \vs 

For $s>0$ let $V_s$ be the connected component of the open set $G^{-1}([0,s[)$ that contains $K$. Then $\{ V_s \}_{s>0}$ is a properly nested collection of Jordan domains (in the sense that $\ov{V_s} \subset V_{s'}$ whenever $s<s'$) and $K=\bigcap_{s>0} V_s$. Thus, $\theta \in I$ if and only if a ray at angle $\theta$ meets $V_s$ for every $s>0$. \vs

We will need the following simple observation in the proof of the Main Theorem: 
  
\begin{lemma}\label{wake}
Suppose $R^-_a,R^+_b$ are partners at a singularity $\omega$ in the sense of \S \ref{partn}. If $R^-_a$ or $R^+_b$ accumulates on $K$, then $]a,b[ \cap I = \es$. 
\end{lemma}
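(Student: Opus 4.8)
The plan is to exploit the partnership structure at $\omega$ together with the fact that, for potentials just below $s := G(\omega)$, the two broken rays $R^-_a$ and $R^+_b$ separate $V_s$ in a controlled way. First I would set up the local picture: since $R^-_a, R^+_b$ are partners at $\omega$, they coincide on $[s-\ve, s]$ and, for potentials in $]s, +\infty[$, they are disjoint and agree with the smooth pieces of nearby rays. By property (I) of the potentials $s_\theta$, after shrinking $\ve$ we may assume no singularity other than $\omega$ has potential in $[s-\ve, s+\ve]$ lying on the relevant component. Fix a potential $t$ slightly above $s$. The key observation is that the arc $R^-_a \cup R^+_b$ restricted to $[t', t]$ for suitable $t' < s$, together with a short equipotential arc at level $t$, bounds a region $W$ (a "wake"), and by the one-sided limit formulas in \S\ref{sbr} the rays $R_\theta$ with $\theta \in \,]a,b[$ enter $W$ near level $t$ and — because $\omega$ is the unique singularity they could crash into at that height — cannot escape $W$ downward past potential $s-\ve$ without passing through $\omega$ or through one of $R^-_a, R^+_b$. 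Since $R^-_a$ and $R^+_b$ are periodic-disjoint from other periodic rays only in special cases, here I instead argue that any ray at angle $\theta \in \,]a,b[$ that accumulates on $K$ would have to meet $V_{s-\ve}$, forcing it into $W$ below level $s$.

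Next I would make the separation quantitative using the domains $V_u$. Suppose $R^-_a$ (say) accumulates on $K$; then $K \subset \ov{V_u}$ for all $u > 0$ and, since accumulation means $R^-_a$ meets $V_u$ for every $u > 0$, in particular the part of $R^-_a$ below potential $s-\ve$ lies in $\ov{V_{s-\ve}}$, hence $\omega \in \ov{V_{s-\ve}}$ (as $\omega = R^-_a(s)$ is on the closure of that piece — careful: $\omega$ has potential $s > s-\ve$, so more precisely the component of $G^{-1}([0,s[)$ containing $K$ is $V_s$ and $\omega \in \bd V_s$). The correct statement is: $\omega \in \bd V_s$ and the two local unstable manifolds of $\omega$ carrying $R^-_a$ and $R^+_b$ upward bound, within $\{G < t\}$ for $t$ slightly above $s$, a sub-wedge $W$ that is disjoint from $V_s$ on the side of the interval $]a,b[$. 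Concretely, $\bd V_s$ near $\omega$ consists of two equipotential-like arcs meeting at $\omega$, and the angular sector at $\omega$ on the $]a,b[$ side lies outside $V_s$. Any smooth ray $R_\theta$ with $\theta \in \,]a,b[$ has $R_\theta(t) \in W$ for $t$ slightly above $s$ (by the one-sided limit at $a$ from above and at $b$ from below), and following it down it stays in $W$, which is eventually disjoint from $V_{s'}$ for $s' < s$; hence $R_\theta$ does not meet $V_{s'}$, so $\theta \notin I$.

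The main obstacle I anticipate is the topological bookkeeping near $\omega$: one must show that the sector of $\{G < t\}$ cut off by the two upward unstable manifolds of $\omega$ that carry $R^-_a$ and $R^+_b$ is precisely the sector containing all rays $R_\theta$ with $\theta \in \,]a,b[$, and that this sector lies on the complementary side of $V_s$ from $K$. This requires knowing that (a) $V_s$ is a Jordan domain whose boundary passes through $\omega$, (b) near $\omega$ the $m$ local unstable manifolds are cyclically ordered consistently with the angular order of the rays descending along them, and (c) the component of $G^{-1}([0,s[)$ on the $]a,b[$ side of this wedge does \emph{not} contain $K$ — which is where the hypothesis that one of $R^-_a, R^+_b$ (not some ray strictly between them) accumulates on $K$ gets used: it pins $K$ to the correct side. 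Once this local separation is established, the descent argument is routine: a ray trapped in a wedge disjoint from $V_{s'}$ for all small $s'$ cannot accumulate on $K = \bigcap_{s'} V_{s'}$.

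Two further points to handle in the writeup: angles $\theta \in \,]a,b[ \cap \mathcal N$, where "the ray at angle $\theta$" means either of $R^\pm_\theta$ — the same wedge-trapping argument applies to both, since below their first crash both still emanate into $W$; and the degenerate possibility $a = b$, in which case $]a,b[ = \TT \setminus \{a\}$ and the conclusion $]a,b[ \cap I = \es$ combined with $a \in I$ (from the accumulation hypothesis) would say $I = \{a\}$, consistent with $K$ being a single point — but this case is excluded here since $K$ is non-degenerate and $I$ is uncountable, so in fact $a \ne b$ whenever the hypothesis holds; I would remark on this rather than belabor it.
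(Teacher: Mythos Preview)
Your separation idea is correct and is exactly what the paper does, but you are working much too hard. The paper's proof is two lines: set $s:=G(\omega)$ and let $W$ be the component of $\CC \sm \big(R^-_a([s,+\infty[) \cup R^+_b([s,+\infty[)\big)$ that does \emph{not} contain $K$; then every ray at an angle in $]a,b[$ is contained in $\ov{W}$, so it cannot accumulate on $K$.

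The difference is that you are trying to build a local ``wedge'' near $\omega$ out of pieces of equipotentials and short arcs of the rays, and then chase components of $G^{-1}([0,s'[)$ downward. None of that is needed. The two half-rays $R^-_a([s,+\infty[)$ and $R^+_b([s,+\infty[)$ meet at $\omega$ and each go out to $\infty$; together they form a single Jordan arc (a Jordan curve on $\Chat$) that globally separates the plane. The hypothesis that one of $R^-_a,R^+_b$ accumulates on $K$ is precisely what lets you name the component ``not containing $K$,'' and the angular ordering at $\infty$ places all rays with angle in $]a,b[$ on that side. No $\ve$-neighborhoods, no case analysis on whether other singularities sit nearby, no separate treatment of $\theta \in \mathcal N$---a ray at such an angle still lives in $\ov{W}$ because distinct periodic or non-crossing rays cannot cross the boundary curve. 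Your bookkeeping worries (items (a)--(c)) all dissolve once $W$ is defined globally rather than as a local sector at $\omega$.
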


\begin{proof}
Let $s:=G(\omega)$ and $W$ be the component of $\CC \sm (R^-_a([s,+\infty[) \cup R^+_b([s,+\infty[))$ not containing $K$. Then all rays at angles in $]a,b[$ are contained in the closure of $W$, hence none can accumulate on $K$.     
\end{proof}

\section{The Proof}\label{sec:proof}

The proof of the Main Theorem begins as follows. After passing to a high enough iterate of $P$ we may assume that $z_0$ is a fixed point of $P$. Since $K \neq \{ z_0 \}$, \thmref{perr}(ii) guarantees that $\La(z_0)$ is non-empty and finite, and all angles in $\La(z_0)$ have a common period $q$ under $\whD$. By passing to the $q$-th iterate of $P$ and replacing $D^q$ with $D$, we can reduce to the case where all angles in $\La(z_0)$ are fixed under $\whD$ and therefore belong to the finite set $\{ j/(D-1) \modd: 0 \leq j \leq D-2 \}$.  \vs

Take any $a_1 \in \La(z_0)$. There is nothing to prove if $R^K_{a_1}=R_{a_1}$ is smooth, so let us assume $R^K_{a_1}=R^-_{a_1}$ (the case where $R^K_{a_1}=R^+_{a_1}$ is similar). Consider the critical point $\omega_1:=R^-_{a_1}(s_{a_1})$ of highest potential on $R^-_{a_1}$. Since $\whD(a_1)=a_1$, the ray $R^-_{a_1}$ is infinitely broken at the preimages $\omega_n := R^-_{a_1}(s_{a_1}/D^{n-1})$ of $\omega_1$. For $n \geq 1$ let $R^+_{b_n}$ be the partner of $R^-_{a_1}$ at $\omega_n$ as defined in \S \ref{partn}. Choosing representative angles between $a_1$ and $a_1+1$, we evidently have  $b_0:=a_1<b_1<b_2<b_3<\cdots<a_1+1$ (\figref{wakes}). Let us check that $\whD(b_n)=b_{n-1}$ for all $n \geq 1$. First note that the critical value $P(\omega_1)$ is non-singular and belongs to both $R^-_{a_1}, R^+_{a_1}$. Since $P(R^-_{a_1})=R^-_{a_1}$, we have $P(R^+_{b_1})=R^+_{a_1}$ and therefore $\whD(b_1)=a_1=b_0$ (see the end of \S \ref{partn}). For $n>1$, since $P(R^-_{a_1})=R^-_{a_1}$, it follows that $P(R^+_{b_n})$ is the partner of $R^-_{a_1}$ at $\omega_{n-1}$, hence $P(R^+_{b_n})=R^+_{b_{n-1}}$, which gives  $\whD(b_n)=b_{n-1}$, as required.\footnote{In fact, one can show that $b_{n+1}-b_n=(b_n-b_{n-1})/D$ for all large enough $n$, but we will not need this for our purposes.} \vs  

\begin{figure}[t]
	\centering
	\begin{overpic}[width=0.6\textwidth]{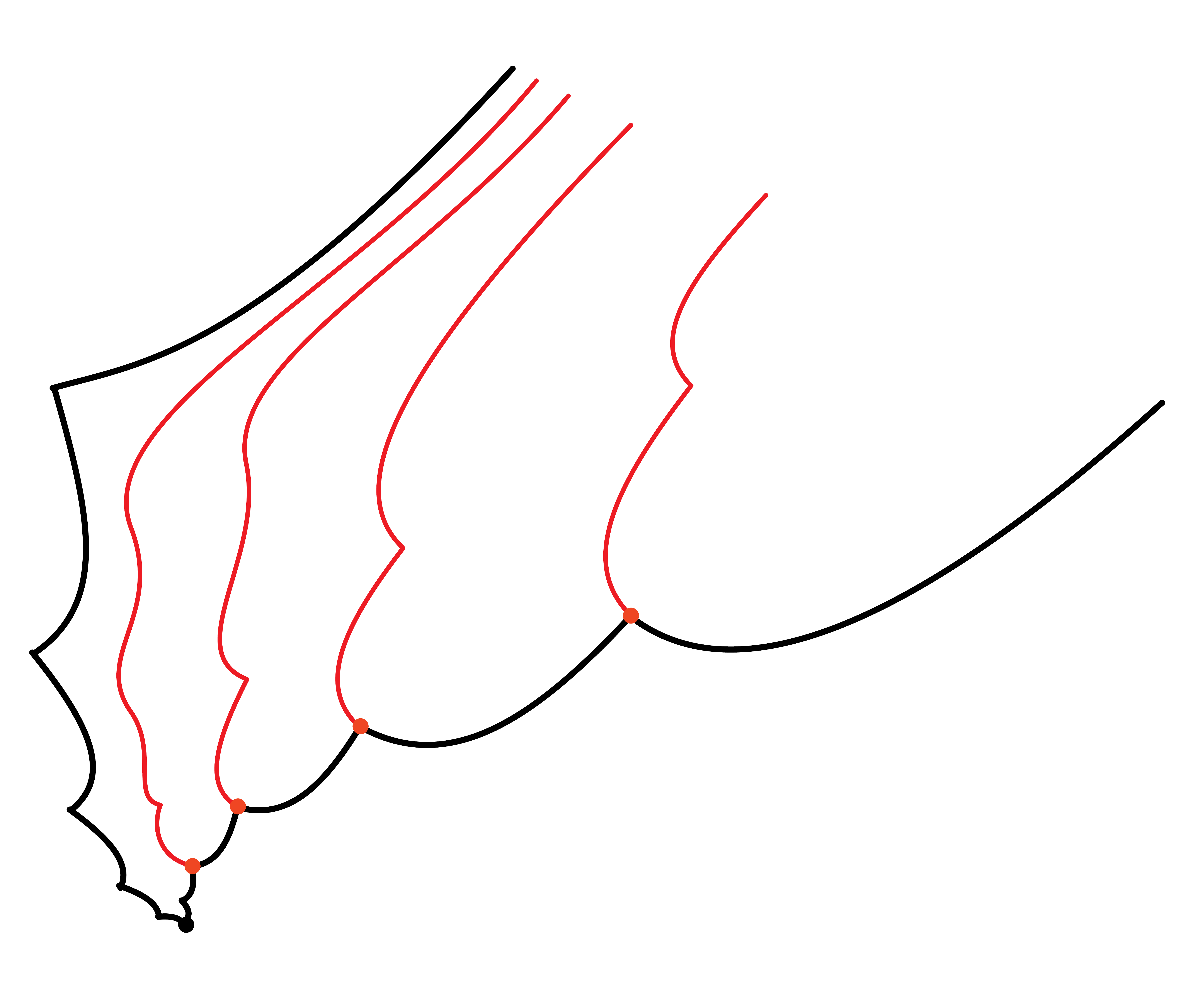}
		\put (97.5,47.5) {\footnotesize $R^-_{a_1}$}
		\put (64.5,65) {\color{red}{\footnotesize $R^+_{b_1}$}}
		\put (53,71) {\color{red}{\footnotesize $R^+_{b_2}$}}
		\put (48.5,77) {\color{red}{\tiny $R^+_{b_3}$}}
		\put (43.4,79.5) {\color{red}{\tiny $R^+_{b_4}$}}
		\put (34,77) {\footnotesize $R^-_{a_2}$}
		\put (51.5,29) {\small $\omega_1$}
		\put (29.5,20) {\small $\omega_2$}
		\put (20.5,13.5) {\small $\omega_3$}
		\put (17,9.5) {\footnotesize $\omega_4$}
		\put (14.5,3) {\small $z_0$}
	\end{overpic}
	\caption{\sl The successive $\whD$-preimages of the interval $]a_1,b_1[$ exhaust the interval $]a_1,a_2[$ between two fixed points of $\whD$. The partners $R^-_{a_1}, R^+_{b_n}$ at $\omega_n$ block rays with angles in $]a_1,b_n[$ from accumulating on $K$.}  
	\label{wakes}
\end{figure}

Now let $a_2$ be the limit of the monotone sequence $\{ b_n \}$. Letting $n \to \infty$ in the relation $\whD(b_n)=b_{n-1}$ shows that $a_2$ is a fixed point of $\whD$. \vs

Recall that $I=I_K$ is the set of $\theta \in \TT$ for which a ray at angle $\theta$ accumulates on $K$. In particular, $I \supset \La(z_0)$.  
 
\begin{lemma}\label{gap}
$]a_1 ,a_2[ \cap I = \es$. In particular, $a_2 \neq a_1 \modd$. 	
\end{lemma}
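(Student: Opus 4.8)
The plan is to exploit the partnership structure built up in the preamble together with \lemref{wake}. For each $n \geq 1$ the rays $R^-_{a_1}, R^+_{b_n}$ are partners at the singularity $\omega_n$, and $R^-_{a_1}$ accumulates on $K$ (indeed it lands at $z_0 \in K$). So \lemref{wake} applies verbatim with $a=a_1$, $b=b_n$, $\omega=\omega_n$ and yields $]a_1, b_n[ \cap I = \es$ for every $n \geq 1$. Since $b_n \uparrow a_2$, the open intervals $]a_1,b_n[$ form an increasing exhaustion of $]a_1,a_2[$, so taking the union over $n$ gives $]a_1,a_2[ \cap I = \bigcup_{n\geq 1} \big( ]a_1,b_n[ \cap I \big) = \es$, which is the first assertion.

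For the ``in particular'' clause, I would argue by contradiction: if $a_2 = a_1 \pmod{\ZZ}$, then $]a_1,a_2[$ would be all of $\TT \sm \{a_1\}$ (this is exactly the degenerate-interval convention fixed at the start of \S\ref{sec:prelim}), and the emptiness of $]a_1,a_2[\cap I$ would force $I \subseteq \{a_1\}$. But $I$ is uncountable — it maps continuously onto $\TT$ via the semiconjugacy $\Pi$ of \thmref{A}(i) (equivalently, $K$ non-degenerate means $\tilde P$ has degree $d \geq 2$, so plenty of rays accumulate on $K$) — a contradiction. Hence $a_2 \neq a_1 \pmod{\ZZ}$.

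I do not expect any serious obstacle here; the lemma is essentially bookkeeping that packages the already-established facts ($\whD(b_n) = b_{n-1}$, the monotonicity $b_0 < b_1 < b_2 < \cdots < a_1 + 1$, and the partnership at each $\omega_n$) into a single clean statement. The only point requiring a little care is making sure the intervals $]a_1,b_n[$ genuinely nest upward to $]a_1,a_2[$ and that no angle of $]a_1,a_2[$ is missed — this is immediate from $b_n \uparrow a_2$ once one checks $a_2 \le a_1 + 1$, which follows from the chosen representatives. The mild subtlety is that \lemref{wake} is stated for a single partnership, so one must invoke it once per $n$ rather than trying to treat the infinitely-broken ray all at once; but since the conclusions are nested this causes no trouble. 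The role of this lemma in the larger proof is presumably to produce a genuine ``gap'' in $I$ adjacent to $a_1$, which will then be used (together with a symmetric argument on the other side, or an iteration) to corner the broken rays landing at $z_0$ and extract a smooth one.
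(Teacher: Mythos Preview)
Your proof is correct and follows essentially the same approach as the paper: apply \lemref{wake} at each $\omega_n$ to get $]a_1,b_n[\cap I=\es$, take the union over $n$ to obtain $]a_1,a_2[\cap I=\es$, and rule out $a_2=a_1\modd$ by the uncountability of $I$.
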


\begin{proof}
For each $n \geq 1$ the rays $R^-_{a_1}, R^+_{b_n}$ are partners at $\omega_n$, so $]a_1,b_n[ \cap I = \es$ by \lemref{wake}. Since $]a_1,a_2[ = \bigcup_{n\geq 1} ]a_1,b_n[$, we conclude that $]a_1 ,a_2[ \cap I = \es$. If $a_2 = a_1 \modd$, the set $I$ would consist of a single point, which would be impossible since $I$ is uncountable. 
\end{proof} 

\begin{lemma}\label{next}
$a_2 \in \La(z_0)$. 
\end{lemma}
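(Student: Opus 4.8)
The plan is to show that $a_2$, the limit of the monotone sequence $\{b_n\}$ of partner angles, is realized by a ray landing at $z_0$. The natural candidate is $R^+_{a_2}$ — or whichever of $R_{a_2}$, $R^\pm_{a_2}$ is the ray accumulating on $K$ — and I would first argue that $a_2 \in I$, i.e., that \emph{some} ray at angle $a_2$ accumulates on $K$. The idea is that the broken rays $R^+_{b_n}$ have full extensions that share long stable manifolds with $R^-_{a_1}$, which accumulates on $K$; taking compact pieces and using the one-sided uniform convergence of nearby rays from \S\ref{sbr}, together with the fact that $b_n \uparrow a_2$, I would show the left limit $R^-_{a_2}$ (or $R_{a_2}$ if $a_2 \notin \mathcal N$) accumulates on $K$. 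More directly: since each $R^+_{b_n}$ crashes into $\omega_n = R^-_{a_1}(s_{a_1}/D^{n-1})$ and $G(\omega_n) \to 0$, the ray $R^K_{a_2}$ is forced to reach $V_s$ for every $s > 0$, hence $a_2 \in I$. Combined with \lemref{gap} (which says $]a_1,a_2[\,\cap I = \es$), the angle $a_2$ is an endpoint of a gap of $I$.

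Next I would use \thmref{A} to transfer the question to the model polynomial $\tilde P$. Under the monotone semiconjugacy $\Pi: I \to \TT$, the ray $R^K_{a_2}$ has the same accumulation set as $\varphi^{-1}(\tilde R_{\Pi(a_2)})$. Because $a_2$ is $\whD$-fixed and $\Pi$ conjugates $\whD^{\circ k}$ (here $k=1$ after our reductions) to $\whd$, the angle $\Pi(a_2)$ is fixed under $\whd$, so $\tilde R_{\Pi(a_2)}$ is a fixed external ray of the polynomial $\tilde P$ with connected Julia set. By the classical Douady–Hubbard landing theorem, $\tilde R_{\Pi(a_2)}$ lands at a repelling or parabolic fixed point of $\tilde P$. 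Pulling back, $R^K_{a_2}$ lands at a repelling or parabolic fixed point of $P^{\circ k}$ lying in $K$.

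It then remains to identify this landing point as $z_0$ itself, rather than some other periodic point in $K$. Here I would exploit the geometry: the rays $R^-_{a_1}$ and $R^K_{a_2}$ bound, together with their common landing behavior, a region of $\CC\sm K$ whose boundary on the $K$-side is controlled by the partners $R^-_{a_1}, R^+_{b_n}$ at the $\omega_n$. Since $R^-_{a_1}$ lands at $z_0$ and the $\omega_n$ converge to $z_0$ (as $\omega_n = R^-_{a_1}(s_{a_1}/D^{n-1})$ and $R^-_{a_1}$ lands at $z_0$), the nested ``wakes'' cut out by the partner pairs shrink down onto $z_0$; any ray at an angle accumulating near $a_2$ from the $a_1$-side is trapped in these shrinking wakes, forcing $R^K_{a_2}$ to land precisely at $z_0$. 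Alternatively, one can argue via \thmref{A}(ii) on the $\tilde P$ side: $\Pi(a_1)$ and $\Pi(a_2)$ are both $\whd$-fixed, and the interval $]\,\Pi(a_1),\Pi(a_2)[$ (or its collapse) forces $\tilde R_{\Pi(a_1)}$ and $\tilde R_{\Pi(a_2)}$ to land at the same fixed point of $\tilde P$, which corresponds to $z_0$.

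The main obstacle I expect is the last step — pinning down that the landing point is $z_0$ and not another periodic point of $P$ in $K$. The convergence $\omega_n \to z_0$ is the crucial input, and making the ``shrinking wakes'' argument rigorous requires care: one must show that the Jordan domains $W_n$ bounded by $R^-_{a_1}([s_n,\infty[) \cup R^+_{b_n}([s_n,\infty[)$ (with $s_n = G(\omega_n)$) and an arc of $\partial V_{s_n}$ have diameters tending to $0$ near $K$, so that $R^K_{a_2} \subset \bigcap_n \ov{W_n}$ accumulates only at $z_0$. This is where the non-degeneracy of $K$ and the polynomial-like structure — giving local connectivity of $K$ near the repelling/parabolic fixed point, or at least enough control on $\partial V_s$ — must be invoked, either directly or through the transfer to $\tilde P$ where the corresponding statement is the honest Douady–Hubbard landing theorem for a polynomial with connected Julia set.
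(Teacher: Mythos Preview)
Your outline for showing $a_2 \in I$ is correct and matches the paper's argument: the partners $R^+_{b_n}$ pass through $\omega_n \in \partial V_{\sigma_n}$ with $\sigma_n = s_{a_1}/D^{n-1} \to 0$, and one-sided uniform convergence (from \S\ref{sbr}, since $b_n \uparrow a_2$) forces the limit ray to enter every $V_s$. One minor correction: the correct candidate is $R^-_{a_2}$ (or $R_{a_2}$ if $a_2 \notin \mathcal N$), not $R^+_{a_2}$, because the $b_n$ approach $a_2$ from below and the left limit of smooth rays is the \emph{left} broken ray.

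Where you diverge from the paper is in the ``main obstacle'' you single out --- identifying the landing point as $z_0$. The paper dissolves this in one line: since $]a_1,a_2[\,\cap I=\emptyset$ (\lemref{gap}) and $\Pi:I\to\TT$ is a continuous monotone \emph{surjection}, necessarily $\Pi(a_1)=\Pi(a_2)$ (a gap of $I$ must collapse under $\Pi$, else $\Pi$ would miss an arc of $\TT$). Then \thmref{A}(ii) says directly that $R^K_{a_1}$ and $R^K_{a_2}$ have the \emph{same} accumulation set, namely $\{z_0\}$. There is no need to invoke Douady--Hubbard landing for $\tilde P$, no need to argue that two a priori distinct $\tilde P$-rays co-land, and no need for shrinking-wakes geometry. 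You brush against this in your final parenthetical ``(or its collapse)'' but treat it as a hedge rather than the whole point; the collapse $\Pi(a_1)=\Pi(a_2)$ is immediate and \emph{is} the landing-point identification. Your primary shrinking-wakes route, by contrast, would require controlling diameters of arcs on $\partial V_{s_n}$ near $K$, which is genuinely delicate (you yourself flag the need for local connectivity or similar control) and is not supplied by anything in the paper's toolkit.
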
 

The proof will also show that $R^K_{a_2}=R_{a_2}$ or $R^-_{a_2}$ according as $a_2 \notin {\mathcal N}$ or $a_2 \in {\mathcal N}$. 

\begin{proof}
It suffices to verify that $a_2 \in I$ for then \thmref{A}(i) implies $\Pi(a_1)=\Pi(a_2)$ by \lemref{gap} and monotonicity of $\Pi$, and \thmref{A}(ii) implies that the rays $R^K_{a_1}=R^-_{a_1}$ and $R^K_{a_2}$ have the same accumulation set, namely $\{ z_0 \}$. \vs
 
Recall that $V_s$ is the component of $G^{-1}([0,s[)$ containing $K$. Consider the potentials $\si_n:=s_{a_1}/D^{n-1}$. Take an arbitrary integer $k \geq 1$ and a constant $0<c<1$ such that $c<\si_k<1/c$. The singularity $\omega_n =R^-_{a_1}(\si_n)=R^+_{b_n}(\si_n)$ is on the boundary of $V_{\si_n}$, so $R^+_{b_n}(\si_n) \in V_{\si_k}$ for all $n>k$. This, in turn, implies that $R^+_{b_n}(\si_k) \in \bd V_{\si_k}$ for all $n>k$. If $a_2 \notin 
{\mathcal N}$, then $\lim_{n \to \infty} R_{b_n}(s)=R_{a_2}(s)$ uniformly for $c \leq s \leq 1/c$ (see the end of \S \ref{sbr}), hence $R_{a_2}(\si_k) \in \bd V_{\si_k}$. Since $k$ was arbitrary, we conclude that $R_{a_2}$ enters $V_s$ for all $s>0$, hence $a_2 \in I$. On the other hand, if $a_2 \in {\mathcal N}$, then $\lim_{n \to \infty} R_{b_n}(s)=R^-_{a_2}(s)$ uniformly for $c \leq s \leq 1/c$ and a similar reasoning shows that $R^-_{a_2}$ enters $V_s$ for all $s>0$, so again $a_2 \in I$.   
\end{proof}

Now if $R^K_{a_2}=R_{a_2}$ is smooth, we are done. Otherwise, by the proof of \lemref{next} $R^K_{a_2}=R^-_{a_2}$ and we can repeat the above argument for $a_2$ in place of $a_1$. Inductively, assuming $a_{n-1} \in \La(z_0)$ and $R^K_{a_{n-1}}=R^-_{a_{n-1}}$ is broken, we can construct the next $a_n \in \La(z_0)$ with the property $]a_{n-1},a_n[ \cap I = \es$. This $a_n$ must be distinct from all its predecessors $a_1, \ldots, a_{n-1}$ since otherwise $I$ would be finite. As every $a_n$ is a fixed point of $\whD$, this process terminates in at most $D-1$ steps. The cause of termination must be reaching an $a_n$ for which $R^K_{a_n}=R_{a_n}$ is smooth.

\end{document}